\documentclass{amsart}
\usepackage{amssymb}
\usepackage{amsmath}
\usepackage{amsfonts}

\sloppy

\begin{document}

\newtheorem{theo}{Theorem}[section]
\newtheorem{atheo}{Theorem*}
\newtheorem{prop}[theo]{Proposition}
\newtheorem{aprop}[atheo]{Proposition*}
\newtheorem{lemma}[theo]{Lemma}
\newtheorem{alemma}[atheo]{Lemma*}
\newtheorem{exam}[theo]{Example}
\newtheorem{coro}[theo]{Corollary}
\theoremstyle{definition}
\newtheorem{defi}[theo]{Definition}
\newtheorem{rem}[theo]{Remark}

\title[Tarskian truth hierarchies]
{Hierarchies of Tarskian truth predicates}
\author{Nik Weaver}
\address{Department of Mathematics, Washington University in St. Louis}
\email{nweaver@wustl.edu}
\date{January 8, 2022}

%%%%%%%%%%%%%%%%%%%%%%%%%%%%%%%%%%%%%%%%%%%%%%%%%%%%%%%%%%%%%%%%%%%%%%%
%   Please insert the article body 
%%%%%%%%%%%%%%%%%%%%%%%%%%%%%%%%%%%%%%%%%%%%%%%%%%%%%%%%%%%%%%%%%%%%%%%

\begin{abstract}
Hierarchies of non self-applicative truth predicates take us beyond $\Gamma_0$.
\end{abstract}

\maketitle

\section{Introduction}

The purpose of this note is to make some earlier work I did on hierarchies of truth theories
\cite{weaver} more accessible. The immediate prompt for doing this was the discovery of an
error in the proof of an important lemma in \cite{weaver}, which was pointed out to me by
Asger Ipsen \cite{ipsen} (though the statement of the lemma is still correct). But in hindsight
I also see that the whole presentation was, perhaps, overly technical and probably not very readable.
It seemed worthwhile to summarize the ideas at a more conceptual level, suppressing some of the
coding technicalities which one needs in order to convert the high-level ideas I will describe
here into the machine language of a formal proof. The proofs have also been simplified a little.

I will be working with number-theoretic formal systems, and the easiest way to
implement a truth predicate for such a system is to introduce a predicate symbol $T$ which
takes a natural number as its argument, with the intended meaning of $T(n)$ being ``the
formula with G\"odel number $n$ is true''. G\"odel numbers are an example of the sort of
coding technicality I intend to pass over in this paper. In fact, more is needed; for
instance, we need to be able within the system to perform basic syntactic constructions on
numerically encoded strings. E.g., if $\ulcorner A\urcorner$ and $\ulcorner B\urcorner$ are the
codes for two formulas $A$ and $B$, and $\ulcorner A \wedge B \urcorner$ is the code for their
conjunction, then we require the ability to calculate $\ulcorner A \wedge B\urcorner$ from $\ulcorner A\urcorner$
and $\ulcorner B\urcorner$. More precisely, we must be able to express and reason about a numerical function
of two arguments which yields $\ulcorner A\wedge B\urcorner$ when given the inputs
$\ulcorner A\urcorner$ and $\ulcorner B\urcorner$. Probably primitive recursive arithmetic
would suffice for everything we need to do along these lines. These are the kinds of details I will
either relegate to footnotes or omit entirely from the present account.

The truth predicates I will be discussing are never self-applicative; they only apply to the sentences
of some target language to which they do not belong. But they are substantial; for instance,
within the truth theory of the target system $\mathcal{S}$ we will be able to prove (a formalization
of) the statement that every theorem provable in $\mathcal{S}$ is true. Since we can also prove in this
truth theory that, say, $0 = 1$ is not true, we will then be able to derive that $0 = 1$ is not a theorem
of $\mathcal{S}$. In other words, the truth theory of $\mathcal{S}$ will always prove that $\mathcal{S}$
is consistent, which we know that $\mathcal{S}$, if it really is consistent, cannot do. This shows that truth
theories increase deductive strength.

This raises the question of what more can be achieved by iterating the construction. We would expect
to produce a hierarchy of truth theories of increasing deductive strength, a quality that can be
measured by asking what the provable ordinals of the system are. Here we say that $\alpha$ is a
{\it provable ordinal} of a system $\mathcal{S}$ if (1) there is a Turing machine that outputs (in
suitably encoded form) a well-ordering of $\mathbb{N}$ which is isomorphic to $\alpha$,
and (2) we can prove in $\mathcal{S}$ that the output of this Turing machine is a well-ordering of
$\mathbb{N}$. If we are working with first-order systems that cannot directly express the concept
of well-ordering, we take being able to prove, in $\mathcal{S}$, transfinite induction up to $\alpha$
(as encoded in $\mathbb{N}$) for all predicates in its language as a stand-in for well-ordering.
The {\it ordinal strength} of $\mathcal{S}$ is the supremum of its provable ordinals.

The subtlety here is that, given a hierarchy of truth theories, there may be a limit as to how much of
the hierarchy we trust. Intuitively, we convince ourselves that a theory in the hierarchy is sound by
a transfinite induction argument, meaning that before accepting its theorems we would need its index
to be, not just an ordinal, but a provable ordinal. As we work our way up the hierarchy, we expect to
accumulate ever-larger provable ordinals, leading us to accept ever-higher theories in the hierarchy.
But the way it is actually going to work is more subtle than this.

\section{The theory ${\rm Tarski}(\mathcal{S})$}

Throughout this paper $\mathcal{S}$ will be a first-order theory which extends Peano arithmetic.
Second-order theories could be analyzed in the same way, with trivial modifications.
Our analysis is indifferent to the use of classical or intuitionistic logic.

The simplest way to augment $\mathcal{S}$ with a truth predicate would be to
add a predicate symbol $T$, together with, for each sentence $A$ of the language of $\mathcal{S}$,
the axiom $A \leftrightarrow T(\ulcorner A\urcorner)$. This would implement
Tarski's ``T-scheme'', so that it would qualify as a truth predicate in Tarski's sense, but it
would accomplish almost nothing. We would have produced a conservative extension of the
original theory, yielding no new theorems expressible in the original, unaugmented language.
Even simple assertions like ``for all sentences $A$ and $B$, if $A$ and $B$ are both true then so
is $A \wedge B$'' could not be proven, as only finitely many instances of the T-scheme could be
used in any proof, which clearly is not sufficient to draw this conclusion.

We need to be able to reason about truth globally. So I define ${\rm Tarski}(\mathcal{S})$ to be the
formal system system whose language $L_T$ is the language $L$ of $\mathcal{S}$ augmented by a
single predicate symbol $T$, and whose nonlogical axioms are the nonlogical axioms of $\mathcal{S}$
together with

\begin{itemize}
\item a single axiom which affirms\footnote{Literally, the statement
``$(\forall n)T(f(n))$'' for some primitive recursive function $f$ which enumerates the G\"odel
numbers of the axioms of $\mathcal{S}$.}
$T(\ulcorner A\urcorner)$ for all the axioms $A$ of $\mathcal{S}$

\item a single axiom which affirms
$$T(\ulcorner A\urcorner) \wedge T(\ulcorner B\urcorner) \to T(\ulcorner C\urcorner)$$
for any $A,B,C \in L$ for which there is a deduction rule that infers $C$ from $A$ and $B$

\item for every formula $A = A(x)$ in $L_T$ with free variable $x$
(and possibly other free variables not shown), the induction axiom
$$(A(0) \wedge (\forall n)(A(n) \to A(n+1))) \to (\forall n) A(n)$$

\item the {\it $\omega$-rule}, a single axiom which states that for every predicate $A$ in $L$
(i.e., every formula in $L$ with exactly one free variable) we have\footnote{We cannot write
``$(\forall n)T(\ulcorner A(n)\urcorner)$'' here; this expression is senseless because
$\ulcorner A(n)\urcorner$ is a number and does not contain any variables that could be
quantified. We need to talk about the formula obtained by replacing
every occurrence of the variable symbol `$n$' in $A$
with a constant term that evaluates to the actual number $n$. So something like
$(\forall n)T(g(\ulcorner A\urcorner, n))$, where $g$ is a primitive recursive function
that, given the G\"odel number of a predicate, returns the G\"odel number of that predicate with every
occurence of its free variable replaced by $\hat{n}$.}
$$(\forall n)T(\ulcorner A(\hat{n})\urcorner) \leftrightarrow
T(\ulcorner (\forall n)A(n)\urcorner),$$
where $\hat{n}$ represents a canonical constant term that evaluates to $n$

\item the T-scheme, which consists of one axiom of the form
$$\overline{A} \leftrightarrow T(\ulcorner A\urcorner)$$
for each formula $A$ in $L$, where $\overline{A}$ is the universal closure of $A$.
\end{itemize}

The intention is to interpret $T(\ulcorner A\urcorner)$ as affirming the truth of $A$, or of the
universal closure of $A$ if it has any free variables.

\section{Justifying ${\rm Tarski}(\mathcal{S})$}

How do we justify accepting ${\rm Tarksi}(\mathcal{S})$, given that we accept $\mathcal{S}$?

By ``accept'' I mean something like: we understand that we have the right, indeed that we are rationally compelled,
to affirm every theorem of $\mathcal{S}$. Why should we feel the same way about ${\rm Tarksi}(\mathcal{S})$?

The question seems a little murky because ${\rm Tarski}(\mathcal{S})$ invokes the notion of ``truth'', which is
surprisingly subtle, and still the subject of philosophical debate. I have my own explanation of the nature
of truth, which I have given in detail in \cite{weaver2}, but I will not call on it here. Instead, I will argue that
a basic grasp of infinitary reasoning would allow us to accept ${\rm Tarksi}(\mathcal{S})$ without requiring
prior familiarity with any notion of truth.

My particular interest is in ``countablist'' reasoning which admits countably infinite, but not uncountable, sets and
constructions --- and sentences and derivations. This alone is enough to justify passing from $\mathcal{S}$
to ${\rm Tarski}(\mathcal{S})$, assuming that the language of $\mathcal{S}$ has only countably many formulas.
Because then we could enumerate these formulas as $A_1$, $A_2$, $\ldots$ and simply define the desired
predicate $T$ by the infinite conjunction
$$(T(\ulcorner A_1\urcorner) \leftrightarrow \overline{A}_1)\wedge
(T(\ulcorner A_2\urcorner) \leftrightarrow \overline{A}_2) \wedge \cdots.$$
No prior ideas about ``truth'' are needed
to do this. I suppose all the material in this paper could be reworked in terms of countably infinite logic,
without mentioning truth at all --- I leave this as a challenge to the reader.

The definition of $T$ I just gave clearly forces us to accept every instance of the T-scheme. What about the
other axioms of ${\rm Tarksi}(\mathcal{S})$? For instance, how do we infer ``$T(\ulcorner A\urcorner)$,
for every axiom $A$ of $\mathcal{S}$''?  This, of course, demands that we have already accepted all the
axioms of $\mathcal{S}$, and not just schematically --- we should have accepted the single statement
$\overline{B}_1 \wedge \overline{B}_2 \wedge \cdots$ where the $B_n$'s enumerate the axioms of $\mathcal{S}$. Granting this, and given the definition of $T$, we can infer $T(\ulcorner B_1\urcorner) \wedge
T(\ulcorner B_2\urcorner) \wedge \cdots$, and from there the single statement ``$T(\ulcorner A\urcorner)$,
for every axiom $A$ of $\mathcal{S}$''.

A similar justification can be given for the second axiom, which states that deduction under $T$ is valid.
For the induction scheme, which is applied to the language of ${\rm Tarski}(\mathcal{S})$, not just
of $\mathcal{S}$, we do not need to assume anything about $T$. All we have to do is imagine
being given the premises $A(0)$ and $(\forall n)(A(n) \to A(n+1))$, for some formula $A$ with
exactly one free variable, in the language of ${\rm Tarski}(\mathcal{S})$. Using countablist
reasoning, we can easily see how to construct a proof from this of $A(1)$, then a proof of $A(2)$,
etc., yielding finally (with one final infinite inference) $(\forall n)A(n)$. Any formula with more than one
free variable can then be handled by the countably long process of separately applying the preceding
reasoning to this formula with its other free variables replaced by constant terms in every possible
way.

Given our definition of $T$, the $\omega$-rule is straightforwardly justifiable along countablist lines.
This completes my sketch of a justification of ${\rm Tarski}(\mathcal{S})$, given $\mathcal{S}$.

I mentioned in the introduction that it will be a theorem of ${\rm Tarski}(\mathcal{S})$ that every
theorem of $\mathcal{S}$ is true. This is proven by induction on the number of steps in the derivation
of a theorem of $\mathcal{S}$. The theorems provable in a single step are precisely the axioms of
$\mathcal{S}$, which we know are all true, and the truth of theorems provable at any subsequent step
then follows inductively from the fact that $T$ respects the deduction rules of $\mathcal{S}$. The
assertion that $0 = 1$ is not true is an immediate consequence of the forward
implication in `$T(\ulcorner 0 = 1\urcorner) \leftrightarrow 0=1$''.

\section{The theory ${\rm Tarski}_{\preceq}(\mathcal{S})$}

Now let us iterate the ${\rm Tarski}(\mathcal{S})$ construction. Start with a computable total ordering
$\preceq$ of $\mathbb{N}$. The intention is to iterate along a well-ordering of $\mathbb{N}$, but we do not
assume that we know $\preceq$ is well-ordered. We do need to know that it is a total ordering,
that it has a least element, that every element has an immediate successor, and so on. There must be
operations of addition, multiplication, and exponentiation which correspond to the usual ordinal operations.
One can prove in Peano arithmetic that any of the standard notation systems for countable ordinals has
all the properties we need.

In the sequel, any notation for sum, product, or exponent will {\it always} refer to these (presumptively)
ordinal operations, not to the usual arithmetical operations on $\mathbb{N}$. Also, from here on, the
symbols ``$0$'', ``$1$'', etc will {\it always} refer to the $\preceq$-minimal element of $\mathbb{N}$, its
immediate $\preceq$-successor, and so on.\footnote{I will also write things like ``$A(2)$'' to mean ``the
formula obtained by replacing the free variable in $A$ with a constant term that evaluates to $2$ (as
I have already done a few paragraphs back). It might be more correct to write ``$A(\hat{2})$'', but I see
no harm in omitting the hat. Similarly, I will write ``$A(\omega)$'', not ``$A(\hat{\omega})$''. But I will
keep the hats on variable symbols, as in ``$A(\hat{n})$'', when appropriate.}
I will write ``$\omega$'' both for the actual ordinal $\omega$
and for the $\omega$th element of $\mathbb{N}$ according to $\preceq$.

The language of ${\rm Tarski}_{\preceq}(\mathcal{S})$, which I denote $L_{\preceq}$, will be the
language of $\mathcal{S}$ augmented
by a family of predicate symbols $T_a$, for $a \in \mathbb{N}$, and one additional predicate symbol
${\rm Acc}$. The idea is that the $T_a$'s should be the truth predicates in the hierarchy and that
${\rm Acc}(a)$ represents our acceptance of the hierarchy up to $a$ (with respect to the ordering
$\preceq$).

For each $a \in \mathbb{N}$ we let $L_a$ be the language of $\mathcal{S}$ augmented by only the
predicate symbols $T_b$ with $b \prec a$. It is the formulas of $L_a$ to which we want to apply
$T_a$.

We would like a hierarchy of formal systems $\mathcal{S}_a$ with $\mathcal{S}_{a + 1} =
{\rm Tarski}(\mathcal{S}_a)$ and $\mathcal{S}_a = {\rm Tarski}(\bigcup_{b \prec a} \mathcal{S}_b)$ at limits.
This could be achieved by a recursive construction, but it is simpler (and also a slightly different, better
result) to define them all at once, in the following way.

For each $a \in \mathbb{N}$ let $\mathcal{S}_a$ be the formal system whose language is $L_{a+1}$
(so it includes the predicate $T_a$) and whose nonlogical axioms are the nonlogical axioms of $\mathcal{S}$
and the induction scheme for every formula in $L_{a+1}$,  together with axioms about $T_a$:

\begin{itemize}
\item the statement ``$T_a(\ulcorner A\urcorner)$ for all the axioms $A$ of $\mathcal{S}$''

\item the statement ``$T_a(\ulcorner A\urcorner) \wedge T_a(\ulcorner B\urcorner) \to
T_a(\ulcorner C\urcorner)$ for all $A, B, C \in L_a$ for which there is a deduction rule that
infers $C$ from $A$ and $B$''

\item a single axiom which says that $T_a$ holds of every instance of the induction scheme for formulas
in $L_a$

\item a single statement that affirms
$$(\forall n)T_a(\ulcorner A(\hat{n})\urcorner) \leftrightarrow
T_a(\ulcorner (\forall n)A(n)\urcorner)$$
for every predicate $A$ in $L_a$

\item for every formula $A$ in $L_a$, the truth definition
$$\overline{A} \leftrightarrow T_a(\ulcorner A\urcorner)$$
\end{itemize}

\noindent and axioms about $T_b$ for $b \prec a$:

\begin{itemize}
\item the single statement ``for all $b \prec a$ and every $A \in L_b$ we have
$T_a(\ulcorner\overline{A} \leftrightarrow T_b(\ulcorner A\urcorner)\urcorner)$''.

\item for each $b \prec a$, the statement ``$T_b(\ulcorner A\urcorner) \leftrightarrow
T_a(\ulcorner A\urcorner)$ for all $A \in L_b$''
\end{itemize}

$\mathcal{S}_a$ is the theory whose acceptance is expressed in ${\rm Tarski}_{\preceq}(\mathcal{S})$ by
${\rm Acc}(a)$. The appropriate axiom for ${\rm Acc}(\cdot)$ is that it should be {\it progressive}, i.e.,
$$(\forall b \prec a){\rm Acc}(b) \leftrightarrow {\rm Acc}(a).$$
I will write ``${\rm Prog}({\rm Acc})$'' for this condition. (Note that it is stronger than the usual notion of
progressivity, which affirms only the forward implication. This choice simplifies the analysis given below a little.)
Indeed, $\mathcal{S}_0$ is precisely ${\rm Tarksi}(\mathcal{S})$, and for any $a \succ 0$, although
$\mathcal{S}_a$ is not exactly the same as ${\rm Tarksi}(\bigcup_{b \prec a} \mathcal{S}_b)$
(because of the final ``$T_b(\ulcorner A\urcorner) \leftrightarrow
T_a(\ulcorner A\urcorner)$ for all $A \in L_b$'' scheme),
essentially the same argument can be used to justify accepting it given that we have accepted
$\bigcup_{b \prec a} \mathcal{S}_b$. So there is a straightforward justification of progressivity of ${\rm Acc}$.

It is easy to see that for every $b \prec a$ it is a theorem of $\mathcal{S}_a$ that all the axioms of
$\mathcal{S}_b$ are true. Together with the truth of all deduction rules for formulas in $L_a$, this yields
the following fact.

\begin{prop}\label{provetrue}
For every $b \prec a$, it is a theorem of $\mathcal{S}_a$ that every theorem of $\mathcal{S}_b$ is true.
\end{prop}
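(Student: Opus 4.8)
The plan is to argue inside $\mathcal{S}_a$ by a formalized induction on the length of a derivation in $\mathcal{S}_b$, exactly as in the one-step case ${\rm Tarski}(\mathcal{S})$, where now ``$C$ is true'' reads ``$T_a(\ulcorner C\urcorner)$''. The first thing to record is that every formula occurring in an $\mathcal{S}_b$-derivation belongs to $L_{b+1}$, and that $b \prec a$ gives $b+1 \preceq a$ and hence $L_{b+1} \subseteq L_a$ (the predicates $T_d$ with $d \preceq b$ are among those of $L_a$); so $T_a$ is entitled to speak about each such formula. The induction then splits into the two familiar halves: that the axioms of $\mathcal{S}_b$ are $T_a$-true (base case), and that $T_a$-truth is passed along each deduction step (inductive step).

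The inductive step is immediate: if $C$ is inferred from earlier lines $A,B$ by a deduction rule then $A,B,C \in L_{b+1} \subseteq L_a$, so the axiom of $\mathcal{S}_a$ affirming $T_a(\ulcorner A\urcorner) \wedge T_a(\ulcorner B\urcorner) \to T_a(\ulcorner C\urcorner)$ for $L_a$-deductions applies, and with the induction hypothesis this yields $T_a(\ulcorner C\urcorner)$. The induction on derivation length is carried out with the induction scheme of $\mathcal{S}_a$ itself, applied to the $L_{a+1}$-predicate ``every line of the derivation up to stage $n$ is $T_a$-true,'' which $\mathcal{S}_a$ certainly has available.

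The substantive work is the base case, which I would organize by axiom type. The nonlogical axioms of $\mathcal{S}$ and the induction instances for $L_{b+1}$-formulas are cleared at once by the matching uniform axioms of $\mathcal{S}_a$ (``$T_a$ of every $\mathcal{S}$-axiom'' and ``$T_a$ of every $L_a$-induction instance''), using $L_{b+1} \subseteq L_a$. The finitely many single-sentence axioms here — that $T_b$ holds of the $\mathcal{S}$-axioms, respects $L_b$-deductions, holds of the $L_b$-induction instances, obeys the $\omega$-rule, and affirms the $T_c$-T-schemes for $c \prec b$ — I would first show to be outright theorems of $\mathcal{S}_a$, by pushing the corresponding $T_a$-axiom (for the last one, the $\mathcal{S}_a$-axiom affirming $T_a(\ulcorner \overline{A} \leftrightarrow T_d(\ulcorner A\urcorner)\urcorner)$ for all $d \prec a$) through the coincidence axiom $T_b(\ulcorner A\urcorner) \leftrightarrow T_a(\ulcorner A\urcorner)$ for $A \in L_b$; being fixed sentences, each is then $T_a$-true by its own instance of the $T_a$-T-scheme. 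The T-scheme governing $T_b$ is $T_a$-true verbatim, since the $\mathcal{S}_a$-axiom just named, taken at $d = b$, asserts $T_a(\ulcorner \overline{A} \leftrightarrow T_b(\ulcorner A\urcorner)\urcorner)$ for every $A \in L_b$.

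The genuinely delicate case — and the one I expect to carry most of the suppressed coding — is the remaining scheme among the $T_b$-axioms: the coincidence scheme ``$T_c(\ulcorner A\urcorner) \leftrightarrow T_b(\ulcorner A\urcorner)$ for $A \in L_c$,'' which is a separate sentence for each $c \prec b$. Because there are infinitely many of these, indexed by an internal parameter, they cannot be cleared one at a time through the external $T_a$-T-scheme, and $\mathcal{S}_a$ has no single axiom asserting their $T_a$-truth. Instead I would establish the $T_a$-truth of the $c$-th coincidence sentence uniformly in $c \prec b$: peel off its universal quantifier with the $\omega$-rule for $T_a$, and for each code $A$ combine the two $T_a$-affirmed biconditionals $\overline{A} \leftrightarrow T_c(\ulcorner A\urcorner)$ and $\overline{A} \leftrightarrow T_b(\ulcorner A\urcorner)$ — both delivered uniformly by the axiom above — using that $T_a$ respects deductions among $L_a$-formulas and is correct on $\mathcal{S}$-provable arithmetical facts. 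This reduces the case to an internal, uniform instance of exactly the closure-under-deduction that the proposition itself expresses, and it is this bookkeeping, rather than any new idea, that a fully formal proof would have to spell out.
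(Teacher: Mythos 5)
Your proposal is correct and follows essentially the same route as the paper, which disposes of this proposition in one sentence: show all axioms of $\mathcal{S}_b$ are $T_a$-true, note that $T_a$ respects deductions among $L_a$-formulas, and induct on derivation length. You simply fill in the details the paper suppresses, correctly isolating the only non-routine point (the $T_a$-truth of the coincidence scheme for $c \prec b$, which must be handled uniformly in $c$ via the $\omega$-rule and the $\mathcal{S}_a$-axiom affirming $T_a(\ulcorner \overline{A} \leftrightarrow T_d(\ulcorner A\urcorner)\urcorner)$ for all $d \prec a$).
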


Now I am ready to describe the axioms of ${\rm Tarski}_{\preceq}(\mathcal{S})$. Its language was
defined earlier; its nonlogical axioms will consist of

\begin{itemize}
\item the nonlogical axioms of $\mathcal{S}$

\item the axiom ${\rm Prog}({\rm Acc})$

\item the induction scheme for all formulas in the language of $\mathcal{S}$ augmented by ${\rm Acc}$

\item for each $a$ and each axiom $A$ of $\mathcal{S}_a$ the statement ``${\rm Acc}(\hat{a}) \to A$''.
\end{itemize}

\noindent Since the meanings of the $T_a$ predicates are not initially specified, I exclude them
from the induction scheme. For the record, I think this precaution is philosophically unnecessary, but for my
purposes there is no harm in being conservative, as none of the results proven below is affected by this restriction.
We could be even more conservative and forbid any reasoning about formulas containing $T_a$ until
${\rm Acc}(\hat{a})$ has been proven by, for each $a$ and each axiom $A$ of $\mathcal{S}_a$, replacing
the ``${\rm Acc}(\hat{a}) \to A$'' axiom with a deduction rule that infers $A$ from ${\rm Acc}(\hat{a})$.
This also would not affect the validity of anything that follows in any essential way.

\section{The Veblen hierarchy}

We can now start to think about the provable ordinals of ${\rm Tarski}_{\preceq}(\mathcal{S})$. Let us
do this in terms of the Veblen hierarchy.

The {\it Veblen functions} $\phi_\alpha$ map ordinals into ordinals, and are defined by induction on the
ordinal $\alpha$ by setting $\phi_0(\beta) = \omega^\beta$ and, for all $\alpha > 0$, letting $\phi_\alpha$
enumerate the common fixed points of $\{\phi_{\alpha'}: \alpha' < \alpha\}$. At a successor stage
$\alpha + 1$ this just means that $\phi_{\alpha + 1}$ enumerates the fixed points of $\phi_\alpha$.

We have fixed a computable total ordering $\preceq$ of $\mathbb{N}$. Now fix an element
$c_0 \in \mathbb{N}$ which is greater than $\omega$ and is stable under ordinal exponentiation, i.e.,
if $a,b \prec c_0$ then $a^b \prec c_0$. Again, we don't have to know that $\{c \in \mathbb{N}: c \prec c_0\}$
is well-ordered. Recall that a predicate $A$ is progressive if for all $a \in \mathbb{N}$
$$(\forall b \prec a)A(b) \leftrightarrow A(a);$$
I will also say it is {\it progressive up to $c_0$} if $(\forall b \prec a)A(b) \leftrightarrow A(a)$
for all $a \prec c_0$.

For each $a,c \prec c_0$ define a predicate $B_a^c$ in the language $L_{\omega^a\cdot c + 1}$ by
$$B_a^c(b) := (\forall A \in L_{\omega^a\cdot c}^p)
T_{\omega^a\cdot c}(\ulcorner{\rm Prog}(A) \to A(\phi_a(b))\urcorner)$$
where $L_a^p$ denotes the set of predicates in $L_a$. For fixed $a$, the predicates $B_a^c$ say
essentially the same thing as $c$ varies: that any progressive predicate holds of $\phi_a(b)$. The only
difference is the languages to which they are
applied. This is necessary because we want to apply some $B_a^c$'s to others.

Let $C$ be the predicate in the language $L_{c_0+1}$
$$C(a) := (\forall c \prec c_0)T_{c_0}(\ulcorner {\rm Prog}(B_a^c)\urcorner)).$$
Recalling that $B_a^c$ and $B_a^{c'}$ differ only
in the languages to which they are applied, the crude intuition for $C(a)$ is that it says the assertion
that anything progressive holds of $\phi_a(b)$ is itself progressive in $b$. The key theorem we have to
prove is that $C$ is itself progressive!

(The reason the strong version of progressivity defined above helps is that, under this definition, if
$A$ is progressive and $A$ holds of $a$, then the ``shifted'' predicate $A^{\to a}(\cdot) =
A(a + \cdot)$ is also progressive. Our version of progressivity ensures that if $A(a)$ fails
then $A(a')$ also fails for all $a' \succ a$, so this shifting can only occur within the initial,
truly progressive part.)

The proof of Theorem \ref{progofc} below is a little long because each $a$ and each $b$ can be zero,
a successor, or a limit, leading to nine separate cases which are nearly all genuinely different. (For limit
values of $b$, the argument is essentially the same regardless of what $a$ is.)
For the sake of readability I have split up the proof into three lemmas.
The crucial case appears in Lemma \ref{lemma3}, when $a$ and $b$ are both successors. (It is
only here that the ``$\omega^a\cdot c$'' expression in $B_a^c$ becomes important. The point is
that the index $\omega^{a+1}\cdot c$ will always be a limit of indices of the form $\omega^a\cdot c'$.)

In the following proofs we will assume ${\rm Acc}(\hat{c}_0)$, which means that all the axioms for
$T_{c_0}$ are available. In particular, we have the T-scheme for $T_{c_0}$. So we can pass freely
between $\overline{A}$ and $T_{c_0}(A)$. What we cannot do is to reason under $T_{c_0}$ about
predicates such as $C$ which do not belong to $L_{c_0}$. Note that in the definition of $C(a)$ given above
we cannot use the
$\omega$-rule to import the quantifier $\forall c \prec c_0$ into $T_{c_0}$, because $B_a^c$ is a
distinct predicate for every $a$ and $c$, so an expression like ``$(\forall c \prec c_0){\rm Prog}(B_a^c)$''
would not be a well-formed formula. But if we are working under $T_{c_0}$, we can simultaneously
reason about $B_a^c$ for arbitrary values of $c$. Indeed, this is just the sort of thing truth predicates
are good for --- allowing us to reason schematically. I have included some
comments about which truth predicate we are reasoning under at which point, but the
essence of the proofs can be understood by ignoring all references to truth and just accepting
that we can reason schematically.

The next three lemmas are proven in ${\rm Tarski}_{\preceq}(\mathcal{S})$.

\begin{lemma}\label{lemma1}
${\rm Acc}(\hat{c}_0)$ implies $C(0)$.
\end{lemma}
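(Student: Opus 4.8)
The plan is to peel off the outer truth predicate and then verify progressivity of $B_0^c$ by a transfinite case analysis on its argument. Assuming ${\rm Acc}(\hat{c}_0)$, all the axioms governing $T_{c_0}$ become available; in particular the T-scheme for $T_{c_0}$ lets me replace $T_{c_0}(\ulcorner {\rm Prog}(B_0^c)\urcorner)$ by ${\rm Prog}(B_0^c)$ itself, since ${\rm Prog}(B_0^c)$ is a closed formula of $L_{c_0}$ (recall $B_0^c \in L_{c+1}$ and $c \prec c_0$). Likewise, combining the axiom ``$T_c(\ulcorner A\urcorner)\leftrightarrow T_{c_0}(\ulcorner A\urcorner)$ for $A \in L_c$'' with the T-scheme for $T_{c_0}$ recovers the T-scheme for $T_c$, so that $B_0^c(b)$ unwinds to the transparent assertion that every progressive predicate $A \in L_c^p$ holds at $\phi_0(b) = \omega^b$. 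Thus $C(0)$ reduces, for a generic $c \prec c_0$, to proving ${\rm Prog}(B_0^c)$, where I reason schematically about $A$ (this schematic reasoning being exactly what the truth predicates license).

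The backward half of progressivity, $B_0^c(b) \to (\forall b' \prec b) B_0^c(b')$, holds uniformly and is easy: if $b' \prec b$ then $\omega^{b'} \prec \omega^b$ (strict monotonicity of ordinal exponentiation, provable in PA for our notation system), so for any progressive $A$ the backward clause of ${\rm Prog}(A)$ carries $A(\omega^b)$ down to $A(\omega^{b'})$; at $b=0$ the conclusion is vacuous. The forward half, $(\forall b' \prec b) B_0^c(b') \to B_0^c(b)$, I handle by cases on $b$. When $b = 0$ the hypothesis is vacuous and I must prove $B_0^c(0)$ outright, i.e.\ $A(\omega^0) = A(1)$ for every progressive $A$; this follows from ${\rm Prog}(A)$ alone ($A(0)$ vacuously, then $A(1)$). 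When $b$ is a limit, every $z \prec \omega^b$ lies below $\omega^{b'}$ for some $b' \prec b$, and $B_0^c(b')$ together with the backward clause of ${\rm Prog}(A)$ gives $A(z)$; the forward clause of ${\rm Prog}(A)$ at the limit $\omega^b$ then yields $A(\omega^b)$.

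The successor case $b = w+1$ is the main obstacle and the only place any real work occurs. Here $\omega^{w+1} = \omega^w \cdot \omega$ is a limit, and from the hypothesis I have only $B_0^c(w)$ directly, so the idea is to climb from $\omega^w$ to $\omega^w\cdot\omega$ in $\omega$ steps using the shift trick from the remark above. Fixing a progressive $A$, I prove $A(\omega^w \cdot m)$ for all $m \geq 1$ by induction on $m$: the base case is $B_0^c(w)$ applied to $A$, and for the step, granting $A(\omega^w \cdot m)$, the shifted predicate $A^{\to \omega^w\cdot m}(\cdot) = A(\omega^w\cdot m + \cdot)$ is again progressive (this is exactly where the strong form of progressivity is used), so $B_0^c(w)$ applied to it gives $A^{\to \omega^w\cdot m}(\omega^w) = A(\omega^w\cdot(m+1))$. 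This induction on the genuine natural number $m$ is legitimate because ${\rm Acc}(\hat{c}_0)$ supplies the induction scheme of $\mathcal{S}_{c_0}$ for all of $L_{c_0+1}$, which contains the relevant $L_{c+1}$ formula.

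Finally, any $z \prec \omega^{w+1}$ satisfies $z \prec \omega^w \cdot m$ for some $m$, whence $A(z)$ by the backward clause of ${\rm Prog}(A)$; the forward clause at the limit $\omega^{w+1}$ then delivers $A(\omega^{w+1})$. This completes $B_0^c(w+1)$, hence the forward half, hence ${\rm Prog}(B_0^c)$ for generic $c \prec c_0$, and therefore $C(0)$. I expect the only delicate points to be bookkeeping ones: confirming that the shifted predicates stay in $L_c^p$ so that $B_0^c(w)$ genuinely applies to them, and that the schematic ``$\forall A$'' reasoning (carried out under $T_c$, itself available under $T_{c_0}$) is uniform enough to commute with the induction on $m$.
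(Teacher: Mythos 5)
Your proof is correct and follows essentially the same route as the paper's: a case split on $b$ into zero, limit, and successor, with the successor case handled by iterating the shifted-predicate trick $\omega$ times and then appealing to progressivity of $A$ at the limit $\omega^{w+1} = \sup_m \omega^w\cdot m$. The only differences are presentational --- you also verify the backward implication of the biconditional form of progressivity (which the paper leaves implicit), and you discharge the $b=0$ case by direct schematic reasoning rather than via the proof-template-plus-Proposition \ref{provetrue} device the paper uses to obtain the statement under $T_c$ uniformly in $A$.
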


\begin{proof}
Working in ${\rm Tarski}_{\preceq}(\mathcal{S})$, assume ${\rm Acc}(\hat{c}_0)$. Fix $c \prec c_0$;
reasoning under $T_{c_0}$, we must prove
${\rm Prog}(B_0^c)$, i.e., we must prove progressivity in $b$ of the statement
\begin{equation}\label{firstone}
(\forall A \in L_c^p)T_c(\ulcorner{\rm Prog}(A) \to A(\omega^b)\urcorner).
\end{equation}

For $b = 0$ this is trivial; it just says that for any $A \in L_c^p$ it is true (according to $T_c$)
that  progressivity of $A$ implies that $A$ holds of $\omega^0 = 1$. The way we prove this in
${\rm Tarski}_{\preceq}(\mathcal{S})$ is by first proving in ${\rm Tarski}_{\preceq}(\mathcal{S})$
--- what can actually be done in Peano arithmetic --- a formalization of the statement that for any
predicate $A$ in $L_c$ there is a proof in the system $\mathcal{S}_c$ from Section 4 of the
sentence ``${\rm Prog}(A) \to A(1)$''. We can do this by writing out a proof template and then
verifying its validity for any $A$. Combining this with Proposition \ref{provetrue} (and using
${\rm Acc}(\hat{c}_0)$), we get $(\forall A \in L_c^p)T_{c_0}(\ulcorner{\rm Prog}(A) \to A(1)\urcorner)$,
and then $(\forall A \in L_c^p)T_c(\ulcorner{\rm Prog}(A) \to A(1)\urcorner)$.
I wanted to spell all this out once, but I will omit similar arguments in the sequel.

For limit values of $b$, assume we are given $(\forall b' \prec b)T_c(B_0^c(b'))$; that is, assume
$$T_c(\ulcorner {\rm Prog}(A) \to A(\omega^{b'})\urcorner)$$
for all $A \in L_c^p$ and all $b' \prec b$. Using the $\omega$-rule to bring the quantifier
``$\forall b' \prec b$'' under $T_c$, we get that for every $A \in L_c^p$ it is true (according to
$T_c$) that progressivity of $A$ implies $A(\omega^{b'})$ for all $b' \prec b$. Since $\omega^b =
\sup_{b' \prec b} \omega^{b'}$, we infer from this (for arbitrary $A$, under $T_c$) that
progressivity of $A$ implies $A(\omega^b)$.

Finally, if $b = b' + 1$ is a successor, assume that for every predicate $A$ in $L_c$ it is true
(according to $T_c$) that progressivity of $A$ implies that $A$ holds of $\omega^{b'}$.
Since ${\rm Prog}(A) \wedge A(\omega^{b'})$ implies ${\rm Prog}(A^{\to \omega^{b'}})$ (as
noted a few paragraphs before this lemma), it is then true that $A^{\to \omega^{b'}}$ holds
of $\omega^{b'}$, i.e., $A$ holds of $\omega^{b'} + \omega^{b'}$. Continuing inductively, we get the
truth for all $n \prec \omega$ of $A(\omega^{b'}\cdot n)$, and progressivity of $A$ then
implies that $A$ holds of $\omega^{b'}\cdot \omega = \omega^{b' + 1} = \omega^b$.
We have shown that $B_0^c(b')$ implies $B_0^c(b' + 1)$. This
completes the proof that (\ref{firstone}) is progressive in $b$ and establishes $C(0)$.
\end{proof}

\begin{lemma}\label{lemma2}
${\rm Acc}(\hat{c}_0)$ implies that for any limit $a \prec c_0$ we have
$(\forall a' \prec a)C(a') \to C(a)$.
\end{lemma}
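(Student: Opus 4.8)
The plan is to fix an arbitrary $c\prec c_0$ and, reasoning under $T_{c_0}$ (whose full axiom set is available from ${\rm Acc}(\hat{c}_0)$, and which reflects each lower predicate $T_{\omega^{a'}\cdot c'}$ on its own language), prove ${\rm Prog}(B_a^c)$; since $c$ was arbitrary this gives $C(a)$. As in Lemma \ref{lemma1}, the backward half of strong progressivity is immediate: $\phi_a$ is strictly increasing, so $b'\prec b$ gives $\phi_a(b')\prec\phi_a(b)$, and the downward-closure built into any progressive $A$ turns $A(\phi_a(b))$ into $A(\phi_a(b'))$, whence $B_a^c(b)\to B_a^c(b')$. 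The real work is the forward implication $(\forall b'\prec b)B_a^c(b')\to B_a^c(b)$, which I would split on the form of $b$. For limit $b$ the argument is exactly the one in Lemma \ref{lemma1}: from $A(\phi_a(b'))$ for all $b'\prec b$, downward-closure puts $A$ below $\phi_a(b)=\sup_{b'\prec b}\phi_a(b')$, and progressivity of $A$ yields $A(\phi_a(b))$.

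The two cases that genuinely use the hypothesis $(\forall a'\prec a)C(a')$ are $b=0$ and $b$ a successor, and both rest on the Veblen identities for limit $a$, namely $\phi_a(0)=\sup_{a'\prec a}\phi_{a'}(0)$ and $\phi_a(b_0+1)=\sup_{a'\prec a}\phi_{a'}(\phi_a(b_0)+1)$. The first observation is that each $C(a')$ supplies ${\rm Prog}(B_{a'}^{c'})$ for every $c'\prec c_0$, and taking the $b=0$ instance of progressivity, whose hypothesis $(\forall b'\prec 0)(\cdots)$ is vacuous, hands us $B_{a'}^{c'}(0)$ outright. For $b=0$: given progressive $A\in L_{\omega^a\cdot c}^p$, for each $a'\prec a$ I would pick $c'\prec c_0$ large enough that $A\in L_{\omega^{a'}\cdot c'}^p$ and apply $B_{a'}^{c'}(0)$ to get $A(\phi_{a'}(0))$; as the $\phi_{a'}(0)$ are cofinal in $\phi_a(0)$, downward-closure and progressivity of $A$ give $A(\phi_a(0))$, i.e.\ $B_a^c(0)$.

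For $b=b_0+1$, assuming $B_a^c(b_0)$, the key move is to apply $B_a^c(b_0)$ not only to $A$ (yielding $A(\phi_a(b_0))$) but to the predicate $B_{a'}^{c'}$ itself, which is progressive by $C(a')$: this produces $B_{a'}^{c'}(\phi_a(b_0))$ in one stroke, without any transfinite induction. One forward step of ${\rm Prog}(B_{a'}^{c'})$ (its predecessors at $\phi_a(b_0)+1$ are all $\preceq\phi_a(b_0)$, hence satisfy $B_{a'}^{c'}$ by downward-closure) then gives $B_{a'}^{c'}(\phi_a(b_0)+1)$, and feeding $A$ into this yields $A(\phi_{a'}(\phi_a(b_0)+1))$. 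Letting $a'$ run cofinally through $a$ and invoking $\phi_a(b_0+1)=\sup_{a'\prec a}\phi_{a'}(\phi_a(b_0)+1)$ together with progressivity of $A$ delivers $A(\phi_a(b_0+1))$, i.e.\ $B_a^c(b_0+1)$.

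The main obstacle is the simultaneous index bookkeeping in the successor case: applying $B_a^c(b_0)$ to $B_{a'}^{c'}$ needs $B_{a'}^{c'}\in L_{\omega^a\cdot c}^p$, i.e.\ $\omega^{a'}\cdot c'\prec\omega^a\cdot c$, whereas feeding $A$ back in needs $A\in L_{\omega^{a'}\cdot c'}^p$, i.e.\ the truth-predicate indices of $A$ lie below $\omega^{a'}\cdot c'$; these pull $c'$ in opposite directions. I would reconcile them by ordinal division, using that $a'\prec a$ with $a$ a limit lets us write $a=a'+\beta$ with $\beta\succ0$, so $\omega^a\cdot c=\omega^{a'}\cdot K$ with $K=\omega^{\beta}\cdot c\prec c_0$ a limit ordinal; writing the largest index occurring in $A$ as $\omega^{a'}\cdot K'+r$ with $K'\prec K$ and $r\prec\omega^{a'}$, the choice $c'=K'+1$ lands strictly between, so it contains $A$ yet keeps $\omega^{a'}\cdot c'\prec\omega^a\cdot c$, and this works for cofinally many $a'$. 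The remaining care is routine but not vacuous: verifying the two Veblen identities inside the system, and checking that the schematic manipulations—quantifying over codes of $A$, and reasoning about $B_{a'}^{c'}$ for varying $c'$—are legitimate under $T_{c_0}$ and its reflection of the lower truth predicates on $L_{\omega^a\cdot c}$.
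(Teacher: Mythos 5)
Your proposal is correct and follows the paper's decomposition exactly (cases $b=0$, limit, successor; the limit case by continuity of $\phi_a$; the other two via the identities $\phi_a(0)=\sup_{a'\prec a}\phi_{a'}(0)$ and $\phi_a(b_0+1)=\sup_{a'\prec a}\phi_{a'}(\phi_a(b_0)+1)$), but the pivotal move in the successor case is obtained differently. The paper gets $B_{a'}^{c'}(\phi_a(b_0))$ essentially for free: choosing $c'$ with $\omega^{a'}\cdot c'=\omega^a\cdot c$ exactly (via $a=a'+a''$, $c'=\omega^{a''}\cdot c$), it observes that since $\phi_a(b_0)$ is a fixed point of $\phi_{a'}$, the statement $B_{a'}^{c'}(\phi_a(b_0))$ is just a rereading of $B_a^c(b_0)$ — same language, same truth predicate, same ordinal. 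You instead feed the progressive predicate $B_{a'}^{c'}$ itself into $B_a^c(b_0)$, which forces the strict inequality $\omega^{a'}\cdot c'\prec\omega^a\cdot c$ (so that $B_{a'}^{c'}\in L_{\omega^a\cdot c}^p$) and hence an $A$-dependent choice of $c'$ squeezed between the indices of $A$ and $\omega^a\cdot c$; your ordinal-division argument for producing such a $c'$ is sound, since $\omega^a\cdot c=\omega^{a'}\cdot K$ with $K$ a limit. This is precisely the self-application trick the paper reserves for the successor-$a$ case (Lemma \ref{lemma3}), so your version has the virtue of unifying the two lemmas around one mechanism, at the price of heavier index bookkeeping and of needing the transfer of ${\rm Prog}(B_{a'}^{c'})$ from $T_{c_0}$ down to $T_{\omega^a\cdot c}$ before the modus ponens under that predicate (which the compatibility axioms do license). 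You are also more explicit than the paper about the backward half of strong progressivity and about where downward-closure of progressive predicates is used at the suprema; both points are correct and the paper leaves them implicit.
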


\begin{proof}
Working in ${\rm Tarski}_{\preceq}(\mathcal{S})$, assume ${\rm Acc}(\hat{c}_0)$.
Fix a limit $a \prec c_0$ and assume $(\forall a' \prec a)C(a')$. This means that we are given the truth,
according to $T_{c_0}$, of ${\rm Prog}(B_{a'}^{c'})$ for every $a' \prec a$ and $c' \prec c_0$, and we
must show that for every $c \prec c_0$ it is true according to $T_{c_0}$ that $B_a^c$ is progressive.

Fix $c \prec c_0$. First we check the case $b = 0$, where we have to
verify $B_a^c(0)$. Since we are assuming, for every $c' \prec c_0$ and $a' \prec a$, that it is true
(according to $T_{c_0}$) that $B_{a'}^{c'}$ is progressive, in particular
we have for all such $c'$ and $a'$ the truth of $B_{a'}^{c'}(0)$. Thus, for every predicate
$A$ in some $L_{\omega^{a'}\cdot c'}$ with $a' \prec a$ and $c' \prec c_0$ --- which is to say, every
predicate $A$ in $L_{c_0}$ --- we have that progressivity of $A$ implies
the truth of $A(\phi_{a'}(0))$ for all $a' \prec a$. In particular, for any predicate
$A \in L_{\omega^a\cdot c}$ it is true according to $T_{c_0}$, and hence according to
$T_{\omega^a\cdot c}$, that
progressivity of $A$ implies that $A$ holds of $\sup_{a' \prec a} \phi_{a'}(0) = \phi_a(0)$.
This verifies $B_a^c(0)$. That was the $b = 0$ case.

Next we will show, reasoning under $T_{c_0}$, that if $b$ is a limit and we have $B_a^c(b')$
for all $b' \prec b$, then we also have $B_a^c(b)$. At this point
we have two induction hypotheses going, one for $a' \prec a$ and one for $b' \prec b$.
To verify $B_a^c(b)$, let $A$ be a predicate in $L_{\omega^a\cdot c}$. Then
$B_a^c(b')$ for all
$b' \prec b$ tells us that according to $T_{\omega^a\cdot c}$, progressivity of $A$ implies that
$A$ holds of $\phi_a(b')$ for all $b' \prec b$. So according to $T_{\omega^a\cdot c}$,
progressivity of $A$
implies that $A$ holds of $\sup_{b' \prec b} \phi_a(b') = \phi_a(b)$, as desired. We
did not even need the induction hypothesis on $a$ in this part.

The final case is for successor values of $b$. Say $b = b' + 1$. Reasoning under $T_{c_0}$, we
have to verify that $B_a^c(b')$ implies $B_a^c(b' + 1)$. For this we must use both
(i) the induction hypothesis on $a$ (for every $c' \prec c_0$ and $a' \prec a$ the predicate
$B_{a'}^{c'}$ is progressive) and (ii) the induction hypothesis on $b$ (we have $B_a^c(b')$).
Now $B_a^c(b')$ says that every progressive $A \in L_{\omega^a\cdot c}$ holds of $\phi_a(b')$,
and $\phi_a(b')$ is a fixed point of every $\phi_{a'}$ with $a' \prec a$, so we can also say
that every progressive $A \in L_{\omega^a\cdot c}$ holds of $\phi_{a'}(\phi_a(b'))$ for
all $a' \prec a$. This means that $B_{a'}^{c'}(\phi_a(b'))$ holds for all $a' \prec a$ and
$c' \prec c_0$ such that $L_{\omega^{a'}\cdot c'} \subseteq L_{\omega^a\cdot c}$.

But for any $a' \prec a$, we can find $a''$ such that $a' + a'' = a$, and then by putting
$c' = \omega^{a''}\cdot c$, we get $\omega^{a'}\cdot c' = \omega^a\cdot c$. So by the
conclusion reached in the last paragraph, for any $a' \prec a$ there exists $c' \prec c_0$
such that $L_{\omega^{a'}\cdot c'} = L_{\omega^a\cdot c}$ and $B_{a'}^{c'}(\phi_a(b'))$
holds. And since every such $B_{a'}^{c'}$ is progressive,
by (i), it follows that $B_{a'}^{c'}(\phi_a(b') + 1)$ also holds.

We are now in a position to affirm that for every $a' \prec a$, every progressive
$A \in L_{\omega^a\cdot c}$ lies within the scope of some $B_{a'}^{c'}$ which satisfies
$B_{a'}^{c'}(\phi_a(b') + 1)$, so that $A$ holds of $\phi_{a'}(\phi_a(b') + 1)$. If $a'$ is a
successor, $a' = a'' + 1$, then this means that $A$ holds of the next fixed point of
$\phi_{a''}$ after $\phi_a(b')$. Thus, by progressivity, $A$ holds of the supremum of
these fixed points over all $a'' \prec a$, which equals $\phi_a(b'+1) = \phi_a(b)$. That
verifies $B_a^c(b)$ and completes the proof of $C(a)$.
\end{proof}

\begin{lemma}\label{lemma3}
${\rm Acc}(\hat{c}_0)$ implies that for any $a \prec c_0$, $C(a)$ implies $C(a + 1)$.
\end{lemma}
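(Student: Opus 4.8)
The plan is to fix $c \prec c_0$ and, reasoning under $T_{c_0}$ (using ${\rm Acc}(\hat{c}_0)$, which unlocks the axioms for $T_{c_0}$, and the hypothesis $C(a)$, which makes every ${\rm Prog}(B_a^{c'})$ with $c' \prec c_0$ available), prove ${\rm Prog}(B_{a+1}^c)$ by the usual split into $b = 0$, $b$ a limit, and $b$ a successor. The backward implication in progressivity is immediate from monotonicity of $\phi_{a+1}$ together with progressivity of the relevant $A$, so in each case I only derive $B_{a+1}^c(b)$ from $(\forall b' \prec b)B_{a+1}^c(b')$. Since $\phi_{a+1}$ enumerates the fixed points of $\phi_a$, in the $b=0$ and successor cases the target $\phi_{a+1}(b)$ is the supremum of the chain $u_0, u_1, \ldots$ with $u_{k+1} = \phi_a(u_k)$, where $u_0 = 0$ if $b=0$ and $u_0 = \phi_{a+1}(b')+1$ if $b = b'+1$. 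The limit case is the easy one: there $\phi_{a+1}(b) = \sup_{b' \prec b}\phi_{a+1}(b')$, and the conclusion follows from the premise by progressivity of $A$ and the $\omega$-rule, exactly as in the limit step of Lemma \ref{lemma1}.

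The entire difficulty is the successor case, and it is the crux of the whole result: from ``every progressive $A \in L_{\omega^{a+1}c}$ holds at $x := \phi_{a+1}(b')$'' I must reach ``every such $A$ holds at the next fixed point $\phi_{a+1}(b'+1) = \sup_k u_k$ of $\phi_a$''. A naive climb—try to establish $B_a^{c'}(u_k)$ and pass from $u_k$ to $u_{k+1} = \phi_a(u_k)$—fails, because extracting $B_a^{c'}(u_k)$ from ${\rm Prog}(B_a^{c'})$ would require $B_a^{c'}$ on all of $[0,u_k)$, i.e. a completed transfinite induction up to the fixed point, which is precisely what we lack. This is where the index $\omega^a\cdot c$ in $B_a^c$ earns its keep: because $\omega^{a+1}\cdot c$ is a limit of the indices $\omega^a\cdot c'$, any progressive $A \in L_{\omega^{a+1}c}$ already lies in some $L_{\omega^a c_*}$ with $c_* \prec \omega c$, and above $c_*$ there are infinitely many elements below the limit $\omega c$, so I may fix an increasing chain $c_* = c^{(0)} \prec c^{(1)} \prec c^{(2)} \prec \cdots \prec \omega c$.

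The device is then to nest the predicates along this chain—``applying some $B_a^c$'s to others''. For $c^{(j)} \prec c^{(j+1)}$ the predicate $B_a^{c^{(j)}}$ lies in $L^p_{\omega^a c^{(j+1)}}$ and, by $C(a)$, is progressive, so it is a legitimate instance of the universally quantified predicate inside $B_a^{c^{(j+1)}}$. I will prove by induction on $k \prec \omega$ the statement
$$\Theta(k) := (\forall j)\, T_{c_0}(\ulcorner B_a^{c^{(j)}}(u_k)\urcorner).$$
For the base $\Theta(0)$: when $b=0$ each $B_a^{c^{(j)}}(0)$ is immediate from ${\rm Prog}(B_a^{c^{(j)}})$ read at $0$; when $b=b'+1$, the premise together with $\phi_a(x)=x$ (so $\phi_a(d)\preceq x$ for $d \preceq x$) gives $B_a^{c^{(j)}}(d)$ for all $d \preceq x$, whence ${\rm Prog}(B_a^{c^{(j)}})$ yields $B_a^{c^{(j)}}(x+1)=B_a^{c^{(j)}}(u_0)$. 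For the step, fix $j$ and feed the progressive predicate $B_a^{c^{(j)}}$ into the instance $B_a^{c^{(j+1)}}(u_k)$ furnished by $\Theta(k)$; this produces $B_a^{c^{(j)}}(\phi_a(u_k)) = B_a^{c^{(j)}}(u_{k+1})$, which is $\Theta(k+1)$ at $j$. Crucially, once the inner formulas are coded $\Theta(k)$ mentions only $T_{c_0}$, so it is a formula of $L_{c_0+1}$ and the induction on $k$ is licensed by the axioms of $\mathcal{S}_{c_0}$ that ${\rm Acc}(\hat{c}_0)$ provides; every inference used is a deduction among $L_{c_0}$-sentences and hence is respected by $T_{c_0}$. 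Reading $\Theta(k)$ at $j=0$ gives $B_a^{c_*}(u_k)$ for every $k$, hence, instantiating at our $A \in L^p_{\omega^a c_*}$, $A(u_{k+1})$ for all $k$; together with $A(u_0)$ and progressivity of $A$ this yields $A(\sup_k u_k) = A(\phi_{a+1}(b'+1))$ by the $\omega$-rule, establishing $B_{a+1}^c(b'+1)$. The chain-nesting is exactly what lets a single $\omega$-induction traverse the $\phi_a$-fixed-point gap that no ground-level climb could cross, and this successor case is where I expect all the real work to lie.
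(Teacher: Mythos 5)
Your proof is correct and follows essentially the same route as the paper: the crux in both is that $\omega^{a+1}\cdot c = \omega^a\cdot\omega c$ is a limit of indices $\omega^a\cdot c'$, so one can nest the progressive predicates $B_a^{c'}$ (progressivity supplied by $C(a)$, the step from $x$ to $x+1$ by the strong form of progressivity) along a chain of such indices and climb the iterates $\phi_a^k$ up to the next fixed point. The only difference is organizational --- you run a single ascending chain $c^{(0)}\prec c^{(1)}\prec\cdots$ with the $\forall j$-quantified induction statement $\Theta(k)$, where the paper uses, for each $n$, a finite descending chain $c'+n, c'+n-1,\ldots,c'$, and you fold the $b=0$ case into the same template --- but this is the same double induction arranged differently.
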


\begin{proof}
Working in ${\rm Tarski}_{\preceq}(\mathcal{S})$, assume ${\rm Acc}(\hat{c}_0)$.
Fix $a \prec c_0$ and assume $C(a)$, and on the way to proving $C(a+1)$, fix $c \prec c_0$.
Reasoning under $T_{c_0}$, we must prove that $B_{a+1}^c$ is progressive, i.e., that for every
$b$, if $B_{a+1}^c(b')$ holds for all $b' \prec b$ then $B_{a+1}^c(b)$ also holds.

If $b = 0$ then we simply have to show that $B_{a+1}^c(0)$ holds, i.e., that any progressive
$A \in L_{\omega^{a+1}\cdot c}$ holds of $\phi_{a+1}(0)$. Since we have assumed $C(a)$, we
know that for any $c' \prec c_0$, according to $T_{c_0}$ the predicate $B_a^{c'}$ is progressive. In
particular, for each $n \prec \omega$ the predicate $B_a^{c + n}$ is progressive. Thus $B_a^{c+n}$
holds of $0$, so every progressive predicate in $L_{\omega^a(c + n)}$ holds of $\phi_a(0)$,
and hence of $\phi_a(0) + 1$. In particular, $B_a^{c + n - 1}$ holds of $\phi_a(0) + 1$.
So every progressive predicate in $L_{\omega^a(c+n-1)}$ holds of $\phi_a(\phi_a(0) + 1)$.
Inductively, every progressive predicate in $L_{\omega^a(c + n - k)}$ holds of
$\phi_a^k(\phi_a(0) + 1)$, which means (taking $k = n$) that every progressive
predicate in $L_{\omega^a\cdot c}$ holds of $\phi_a^n(\phi_a(0) + 1)$. Since $n$ was arbitrary, it
follows that every progressive predicate in $L_{\omega^a\cdot c}$ holds of $\sup_{n \prec \omega}
\phi_a^n(\phi_a(0) + 1) = \phi_{a+1}(0)$. This finishes the $b = 0$ case.

Next, assuming that for some limit $b$ we have $B_{a+1}^c(b')$
for all $b' \prec b$, we must verify $B_{a+1}^c(b)$. This is easy, because we are given that any
progressive predicate in $L_{\omega^{a+1}\cdot c}$ holds of $\phi_{a+1}(b')$ for all $b' \prec b$,
and so by progressivity it holds of $\sup_{b' \prec b}\phi_{a+1}(b') = \phi_{a+1}(b)$. The corresponding
cases in Lemmas \ref{lemma1} and \ref{lemma2} were handled in the same way.

Finally, letting $b$ be arbitrary, we must
make the inference from $B_{a+1}^c(b)$ to $B_{a+1}^c(b + 1)$. So assume $B_{a+1}^c(b)$ and fix
$A \in L^p_{\omega^{a+1}\cdot c}$. We can write $\omega^{a+1}\cdot c = \omega^a\cdot \omega c$,
and since $\omega c$ is a limit, this shows that $A$ must belong to $L_{\omega^a\cdot c'}$ for
some $c' \prec \omega c$. Moreover, since $\omega c$ is a limit, we have $c' + n \prec \omega c$
for all $n \prec \omega$.

Now for each $n \prec \omega$ the
predicate $B_a^{c' + n}$ belongs to $L_{\omega^{a+1}\cdot c}$, and is progressive since we are
assuming $C(a)$, so the hypothesis of $B_{a+1}^c(b)$ yields $B_a^{c' + n}(\phi_{a+1}(b))$,
and then by progressivity we get $B_a^{c' + n}(\phi_{a+1}(b) + 1)$. Applying this to
$B_a^{c' + n - 1}$, which lies in $L_{\omega^a(c' + n - 1) + 1} \subseteq L_{\omega^a(c + n)}$, then yields
$B_a^{c' + n - 1}(\phi_a(\phi_{a+1}(b) + 1))$,
and inductively we finally get $B_a^{c'}(\phi_a^n(\phi_{a+1}(b) + 1))$. Since
$A \in L_{\omega^a\cdot c'}$, if $A$ is progressive this entails
$A(\phi_a^{n+1}(\phi_{a+1}(b) + 1))$, and as $n$ was arbitrary
$A(\phi_{a+1}(b + 1))$ follows. We have established $B_{a+1}^c(b)$, as desired.
\end{proof}

Putting these lemmas together yields the following theorem.

\begin{theo}\label{progofc}
${\rm Tarski}_{\preceq}(\mathcal{S})$ proves that ${\rm Acc}(\hat{c}_0)$
implies $C$ is progressive up to $c_0$.
\end{theo}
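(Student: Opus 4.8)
The plan is to unfold ``progressive up to $c_0$'' into its defining content: assuming ${\rm Acc}(\hat{c}_0)$, I must show that for every $a \prec c_0$ the biconditional $(\forall a' \prec a)C(a') \leftrightarrow C(a)$ holds, working throughout in ${\rm Tarski}_{\preceq}(\mathcal{S})$. I would split the forward implication according to the trichotomy on $a$ that the three preceding lemmas were tailored to. If $a = 0$, the antecedent $(\forall a' \prec 0)C(a')$ is vacuous and Lemma \ref{lemma1} delivers $C(0)$ outright. If $a$ is a limit, the forward implication is exactly the statement of Lemma \ref{lemma2}. If $a = \bar{a} + 1$ is a successor, I would instantiate the hypothesis $(\forall a' \prec a)C(a')$ at $a' = \bar{a}$ to extract $C(\bar{a})$ and then apply Lemma \ref{lemma3} to obtain $C(\bar{a} + 1)$, i.e.\ $C(a)$. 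Since each lemma is already conditioned on ${\rm Acc}(\hat{c}_0)$, these three cases glue with no further work, yielding the forward half of progressivity uniformly for all $a \prec c_0$.

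It remains to secure the reverse implication $C(a) \to (\forall a' \prec a)C(a')$, the \emph{strong} half that distinguishes the progressivity used here from the usual one. For $a = 0$ this is vacuous. In general I would fix $a' \prec a$ and $c' \prec c_0$ and try to read off the progressivity of $B_{a'}^{c'}$ from that of the $B_a^c$ supplied by $C(a)$, exploiting the Veblen inequality $\phi_{a'}(b) \preceq \phi_a(b)$ together with the downward closure of progressive predicates that the strong definition guarantees (the observation recorded just before Lemma \ref{lemma1}): if every progressive $A$ in a large enough language holds of $\phi_a(b)$, then by downward closure it holds of the smaller ordinal $\phi_{a'}(b)$ as well. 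Choosing $c = c'$ makes $L_{\omega^{a'}\cdot c'} \subseteq L_{\omega^a\cdot c'}$, so this yields the pointwise implication $B_a^{c'}(b) \to B_{a'}^{c'}(b)$ for every $b$.

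The main obstacle is precisely this reverse direction. The forward half is essentially handed to us by Lemmas \ref{lemma1}--\ref{lemma3}, but a bare pointwise implication between $B_a^{c'}$ and $B_{a'}^{c'}$ does not by itself transfer the \emph{progressivity} statement, and one cannot simply iterate the progressivity of $B_a^{c'}$ to high arguments because $\preceq$ is not assumed well-ordered. The delicate point I would concentrate on is converting the pointwise monotonicity into a genuine statement about ${\rm Prog}(B_{a'}^{c'})$; I expect this to go through by pairing the monotonicity with the fact that the backward clause of each individual ${\rm Prog}(B_a^c)$ is automatic (again from downward closure of the inner predicates), so that the only substantive content left to verify is the forward clause for $B_{a'}^{c'}$, which the fixed-point structure of $\phi_a$ relative to $\phi_{a'}$ should supply.
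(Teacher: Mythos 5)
Your assembly of the forward implication is exactly the paper's proof: the paper's entire argument for Theorem \ref{progofc} is the one-line remark that the lemmas combine, and the combination intended is precisely your trichotomy --- Lemma \ref{lemma1} for $a = 0$, Lemma \ref{lemma2} for limit $a$, and Lemma \ref{lemma3} applied to the predecessor when $a$ is a successor, all under the standing hypothesis ${\rm Acc}(\hat{c}_0)$. That half is correct and complete, and it is all the paper itself supplies.

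Where you go beyond the paper is in worrying about the backward clause $C(a) \to (\forall a' \prec a)C(a')$ that the biconditional notion of progressivity formally demands, and here your own sketch does not close the issue you raise. The pointwise implication $B_a^{c'}(b) \to B_{a'}^{c'}(b)$ is fine (downward closure of the strongly progressive inner predicates $A$ plus $\phi_{a'}(b) \preceq \phi_a(b)$), and the backward clause of each individual ${\rm Prog}(B_{a'}^{c'})$ is indeed automatic for the same reason; but the remaining forward clause of ${\rm Prog}(B_{a'}^{c'})$ at a successor argument $b = b'' + 1$ asks that every progressive $A \in L_{\omega^{a'}\cdot c'}$ advance from $\phi_{a'}(b'')$ to $\phi_{a'}(b''+1)$, and establishing that is exactly the content of Lemmas \ref{lemma1}--\ref{lemma3} run below $a'$ --- i.e., it needs $(\forall a'' \prec a')C(a'')$, which is what you are trying to derive, so the route is circular. ``The fixed-point structure should supply it'' is not an argument, and the paper is silent on this direction. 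The clean repair is not to prove downward closure of $C$ but to replace $C$ by its downward closure $C^*(a) := (\forall a' \preceq a)C(a')$: forward progressivity of $C$ (which the three lemmas do give) makes $C^*$ progressive in the strong biconditional sense, and $C^*$ serves identically in Corollary \ref{upgamma}, since $(\forall a \prec \gamma_n)C^*(a)$ and $(\forall a \prec \gamma_n)C(a)$ coincide. With that substitution, your forward-direction argument is the whole proof; without it, neither your sketch nor the paper's establishes the theorem in the exact form stated.
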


Define $\gamma_0 = 0$, $\gamma_1 = \phi_0(0) = 1$, $\gamma_2 = \phi_1(0) =
\varepsilon_0$, etc., with $\gamma_{n+1} = \phi_{\gamma_n}(0)$. The ordinal $\Gamma_0$
is defined as the limit $\Gamma_0 = \sup_n \gamma_n$. Just as with $\omega$, I will use these
symbols both for abstract ordinals and their numerical representatives in $(\mathbb{N}, \preceq)$.

In what follows the term ``transfinite induction up to $a$ for $\mathcal{S}$'' will refer to the scheme
consisting of all sentences of the form ``${\rm Prog}_{\hat{a}}(A) \to (\forall b \prec \hat{a})A(b)$'' as $A$ ranges
over the predicates in the language of $\mathcal{S}$, where ``${\rm Prog}_{\hat{a}}(A)$'' means ``$A$ is progressive
up to $\hat{a}$''.

\begin{coro}\label{upgamma}
Let $n \geq 2$. Then ${\rm Tarski}_{\preceq}(\mathcal{S}))$ plus transfinite induction up to $\gamma_n$
for ${\rm Tarski}_{\preceq}(\mathcal{S})$ proves transfinite induction up to $\gamma_{n+1}$ for $\mathcal{S}$.
\end{coro}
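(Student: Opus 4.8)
The plan is to invoke Theorem \ref{progofc} with the specific choice $c_0 = \gamma_n$. For $n \geq 2$ the ordinal $\gamma_n$ is $\succ \omega$, is stable under ordinal exponentiation (being an $\varepsilon$-number or larger), and is a limit, so it is a legitimate choice of $c_0$. Working in ${\rm Tarski}_{\preceq}(\mathcal{S})$, I would first establish ${\rm Acc}(\hat{c}_0)$: applying the hypothesis (transfinite induction up to $\gamma_n$) to the predicate ${\rm Acc}$, which lies in the language of ${\rm Tarski}_{\preceq}(\mathcal{S})$, together with the axiom ${\rm Prog}({\rm Acc})$, yields $(\forall b \prec \gamma_n){\rm Acc}(b)$, and the instance of ${\rm Prog}({\rm Acc})$ at $a = \gamma_n$ then upgrades this to ${\rm Acc}(\hat{\gamma}_n) = {\rm Acc}(\hat{c}_0)$. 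With ${\rm Acc}(\hat{c}_0)$ in hand, Theorem \ref{progofc} gives that $C$ is progressive up to $c_0 = \gamma_n$. Applying transfinite induction up to $\gamma_n$ a second time, now to the predicate $C$ (again a predicate of $L_{\preceq}$), produces $(\forall a \prec \gamma_n)C(a)$.

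The next step bridges the gap between the full progressivity ${\rm Prog}$ appearing inside $B_a^c$ and the bounded progressivity that the conclusion is entitled to assume. Given a predicate $A$ in the language of $\mathcal{S}$ which is progressive up to $\gamma_{n+1}$, I would pass to the auxiliary predicate
$$A^*(b) := (\forall \beta \preceq b)(\beta \prec \gamma_{n+1} \to A(\beta)),$$
which is again a predicate in the language of $\mathcal{S}$. A short check shows $A^*$ is \emph{fully} progressive: the backward half of progressivity is immediate, while the forward half reduces, for $b \prec \gamma_{n+1}$, to the forward progressivity of $A$ below $\gamma_{n+1}$ (available by hypothesis) and is vacuous for $b \succeq \gamma_{n+1}$. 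Moreover, for $b \prec \gamma_{n+1}$ the clause $A^*(b)$ reads simply as $(\forall \beta \preceq b)A(\beta)$.

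Now I would feed $A^*$ into the machinery. For each $a \prec \gamma_n$, $C(a)$ provides the progressivity of $B_a^c$ (reasoning under $T_{c_0}$ and descending to $T_{\omega^a\cdot c}$ via the T-schemes, which is legitimate since $\omega^a\cdot c \prec \gamma_n = c_0$ and, by strong progressivity of ${\rm Acc}$ below $c_0$, all the relevant $T$-axioms are available). The base instance of strong progressivity, the case $b = 0$ whose hypothesis is vacuous, gives $B_a^c(0)$ outright; applied to the progressive predicate $A^*$ this yields $A^*(\phi_a(0))$, i.e.\ $(\forall \beta \preceq \phi_a(0))A(\beta)$ since $\phi_a(0) \prec \gamma_{n+1}$. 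Finally, because $\gamma_n$ is a limit we have $\gamma_{n+1} = \phi_{\gamma_n}(0) = \sup_{a \prec \gamma_n}\phi_a(0)$, so the points $\phi_a(0)$ are cofinal in $\gamma_{n+1}$; every $\beta \prec \gamma_{n+1}$ satisfies $\beta \preceq \phi_a(0)$ for some $a \prec \gamma_n$, whence $A(\beta)$. This is exactly $(\forall \beta \prec \gamma_{n+1})A(\beta)$, the desired instance of transfinite induction up to $\gamma_{n+1}$ for $\mathcal{S}$.

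I expect the main obstacle to be the bookkeeping in the middle step: verifying that $A^*$ is genuinely (fully) progressive so that it is an admissible input to $B_a^c$, and confirming that the passage between $T_{c_0}$ and the relevant $T_{\omega^a\cdot c}$ is justified throughout. The supporting ordinal fact that $\phi_{\gamma_n}(0)$ is the supremum of the $\phi_a(0)$ for $a \prec \gamma_n$ (which is precisely where the hypothesis that $\gamma_n$ is a limit, hence $n \geq 2$, enters) is routine but must be in place for the cofinality argument to close.
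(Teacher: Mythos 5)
Your proposal follows essentially the same route as the paper's own proof: take $c_0 = \gamma_n$, combine ${\rm Prog}({\rm Acc})$ with the transfinite-induction hypothesis to obtain ${\rm Acc}(\hat{\gamma}_n)$, invoke Theorem \ref{progofc} and transfinite induction once more to get $(\forall a \prec \gamma_n)C(a)$, extract the base instances $B_a^c(0)$, and finish using the cofinality of the $\phi_a(0)$ in $\gamma_{n+1} = \phi_{\gamma_n}(0)$. Your extra $A^*$ step, which upgrades a predicate that is progressive only up to $\gamma_{n+1}$ to a fully progressive one, is correct and in fact supplies a detail the paper's proof elides: as written the paper derives the conclusion only with full ${\rm Prog}(A)$ as antecedent, whereas its own definition of ``transfinite induction up to $\gamma_{n+1}$'' requires the weaker antecedent ${\rm Prog}_{\hat{a}}(A)$ (progressivity up to $\gamma_{n+1}$), and that stronger form is what the iteration in Section 6 actually needs when it applies the corollary to the merely boundedly progressive predicate $C$.
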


\begin{proof}
Fix $n$. Since ${\rm Acc}$ is progressive, transfinite induction up to $\gamma_n$ for
${\rm Tarski}_{\preceq}(\mathcal{S})$ yields $(\forall a \prec \gamma_n){\rm Acc}(a)$, and then,
with one further application of progressivity, ${\rm Acc}(\gamma_n)$. Theorem \ref{progofc} then
yields, in ${\rm Tarski}_{\preceq}(\mathcal{S})$ plus ${\rm Acc}(\gamma_n)$, that $C$ is progressive
up to $\gamma_n$, and now the hypothesis on transfinite induction up to $\gamma_n$ yields
$(\forall a \prec \gamma_n)C(a)$. In particular, we have $(\forall a \prec \gamma_n)T_{c_0}(B_a^0(0))$.

Thus for any predicate $A$ in $L$, we have
$$T_{c_0}({\rm Prog}(A) \to (\forall a \prec \gamma_n)A(\phi_a(0))),$$
hence
$${\rm Prog}(A) \to (\forall a \prec \gamma_n)A(\phi_a(0))$$
hence
$${\rm Prog}(A) \to (\forall b \prec \gamma_{n+1})A(b)$$
since $\sup_{a \prec \gamma_n} \phi_a(0) = \phi_{\gamma_{n+1}}$. We have proven transfinite
induction up to $\gamma_{n+1}$ for $\mathcal{S}$.
\end{proof}

\section{Iterating the ${\rm Tarski}_{\preceq}(\mathcal{S})$ construction}

Theorem \ref{progofc} doesn't get us very far by itself, since it requires a strong assumption about
${\rm Acc}$. What we must do now is to ``reflect'' on the legitimacy of the
${\rm Tarski}_{\preceq}$ construction. We have agreed that, for arbitrary $\mathcal{S}$, if we
accept $\mathcal{S}$ then we should accept ${\rm Tarski}_{\preceq}(S)$, and so we should also accept
${\rm Tarski}^2_{\preceq}(\mathcal{S}) = {\rm Tarski}_{\preceq}({\rm Tarski}_{\preceq}(\mathcal{S}))$,
then ${\rm Tarski}^3_{\preceq}(\mathcal{S}) = {\rm Tarski}_{\preceq}({\rm Tarski}^2_{\preceq}(\mathcal{S}))$,
and so on. We could now formulate a ``higher order'' construction that iterates the $\mathcal{S} \mapsto
{\rm Tarski}_{\preceq}(\mathcal{S})$ construction along $\preceq$, which would be justified by countablist
reasoning in the same way we justified ${\rm Tarski}_{\preceq}(\mathcal{S})$. This was done in \cite{weaver},
and the ordinal strength that results from doing this can be found there. Here I only want to consider
${\rm Tarski}^\omega_{\preceq}(\mathcal{S})$.

\begin{theo}
${\rm Tarski}^\omega_{\preceq}({\rm PA}) = \bigcup_{n \prec \omega}
{\rm Tarski}^n_{\preceq}({\rm PA})$ proves ${\rm Prog}(A) \to A(\hat{a})$ for every $a \prec \Gamma_0$
and every predicate $A$ in its language.
\end{theo}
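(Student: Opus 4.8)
The plan is to peel the statement into a sequence of transfinite-induction claims indexed by the $\gamma_m$ and then climb the tower of iterates using Corollary \ref{upgamma} as the driving mechanism. Write $\mathcal{T}_n := {\rm Tarski}^n_{\preceq}({\rm PA})$, so that $\mathcal{T}_0 = {\rm PA}$, $\mathcal{T}_{n+1} = {\rm Tarski}_{\preceq}(\mathcal{T}_n)$, and $\mathcal{T}_\omega = \bigcup_n \mathcal{T}_n$. Every predicate $A$ in the language of $\mathcal{T}_\omega$ already lies in the language of some $\mathcal{T}_j$, and every $a \prec \Gamma_0$ satisfies $a \prec \gamma_m$ for some standard $m$, since $\Gamma_0 = \sup_m \gamma_m$. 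Moreover, using the full progressivity hypothesis ${\rm Prog}(A)$ demanded by the theorem (which implies progressivity up to any bound), transfinite induction up to $\gamma_m$ for $\mathcal{T}_j$ applied to a progressive $A$ yields $(\forall b \prec \gamma_m)A(b)$ and in particular $A(\hat a)$. So it is enough to prove, by external induction on $m \geq 2$, the claim $R(m)$: for every $j$, $\mathcal{T}_\omega$ proves transfinite induction up to $\gamma_m$ for $\mathcal{T}_j$.

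The inductive step is almost a verbatim application of Corollary \ref{upgamma}. Assume $R(m)$ with $m \geq 2$, fix $j$, and apply the corollary with $\mathcal{S} = \mathcal{T}_j$, so that ${\rm Tarski}_{\preceq}(\mathcal{S}) = \mathcal{T}_{j+1}$: it shows that $\mathcal{T}_{j+1}$ together with transfinite induction up to $\gamma_m$ for $\mathcal{T}_{j+1}$ proves transfinite induction up to $\gamma_{m+1}$ for $\mathcal{T}_j$. The hypothesis it consumes is exactly the instance of $R(m)$ one level up, at $j+1$ (indeed only the two instances for the predicates ${\rm Acc}$ and $C$ are ever used), and this is available inside $\mathcal{T}_\omega$. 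As $j$ was arbitrary, $R(m+1)$ follows. The essential feature is that each application lowers the index, from $\mathcal{T}_{j+1}$ to $\mathcal{T}_j$; this is precisely why no single finite stage suffices and why all of $\mathcal{T}_\omega$ is needed, and it is also why exactly $\omega$ iterations deliver the supremum $\sup_m \gamma_m = \Gamma_0$.

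The main obstacle is the base case $R(2)$, transfinite induction up to $\gamma_2 = \varepsilon_0$; the corollary cannot supply it, since its proof requires $c_0 \succ \omega$ stable under exponentiation and so only operates for $n \geq 2$. Here one must draw on the intrinsic strength of a single Tarski layer, and the truth predicate is indispensable. The layer $\mathcal{T}_{n+1} = {\rm Tarski}_{\preceq}(\mathcal{T}_n)$ contains the full induction scheme for every formula in the language of $\mathcal{T}_n$, so by the standard formalized Gentzen argument it proves, uniformly in $\alpha$ and in the code of $A$, that ``${\rm Prog}(A) \to (\forall b \prec \alpha)A(b)$'' is provable for every predicate $A$ of $\mathcal{T}_n$ and every $\alpha \prec \varepsilon_0$. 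It cannot, however, reach the limit statement at $\varepsilon_0$ this way, as that would itself be an instance of transfinite induction up to $\varepsilon_0$. To cross this final limit I would pass to the next layer and invoke its truth predicate for $\mathcal{T}_{n+1}$-sentences: the reflection property that provability in $\mathcal{T}_{n+1}$ implies truth (Section 3; compare Proposition \ref{provetrue}) turns the uniform provability above into the single assertion that ``${\rm Prog}(A) \to (\forall b \prec \alpha)A(b)$'' is true for all $\alpha \prec \varepsilon_0$ simultaneously, whereupon the $\omega$-rule and the T-scheme discharge the truth predicate and give, for each progressive $A$, that $(\forall b \prec \alpha)A(b)$ holds for every $\alpha \prec \varepsilon_0$, hence $(\forall b \prec \varepsilon_0)A(b)$ since $\varepsilon_0$ is a limit. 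This is exactly the schematic step that the truth predicate exists to license: finitely many instances of the T-scheme could never assemble the fragments into transfinite induction up to $\varepsilon_0$, but a single global truth predicate, reflecting uniform provability into uniform truth, crosses the limit without ever invoking $\varepsilon_0$-induction.

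Finally I would assemble the pieces. Given a predicate $A$ in the language of $\mathcal{T}_\omega$ and an $a \prec \Gamma_0$, choose $j$ with $A$ in the language of $\mathcal{T}_j$ and $m \geq 2$ with $a \prec \gamma_m$; then $R(m)$ furnishes ${\rm Prog}(A) \to (\forall b \prec \gamma_m)A(b)$ as a theorem of $\mathcal{T}_\omega$, and hence ${\rm Prog}(A) \to A(\hat a)$, as required. I expect the only genuinely delicate bookkeeping to be the tracking of which formulas fall under the induction scheme at each layer---recall that the $T_a$ are deliberately excluded from induction, so induction on a formula containing $T_a$ must be routed through the next layer, whose scheme does cover the whole lower language---together with the (standard but non-trivial) fact that each layer proves ``${\rm Prog}(A)\to(\forall b\prec\alpha)A(b)$'' uniformly for all $\alpha\prec\varepsilon_0$.
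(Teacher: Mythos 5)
Your proposal is correct and its skeleton coincides with the paper's: an external induction on $m$ through the $\gamma_m$, with Corollary \ref{upgamma} driving the step and each application descending one level in the tower (your reformulation as ``$R(m)$ holds for every $j$'' is just the paper's observation that ${\rm Tarski}^\omega_{\preceq}(\mathcal{S}) = {\rm Tarski}^\omega_{\preceq}({\rm Tarski}^k_{\preceq}(\mathcal{S}))$ for all $k$, packaged as a uniform induction hypothesis). Where you genuinely diverge is the base case $R(2)$. The paper does not formalize provability at all: it introduces the jump predicate $J_A(b) := (\forall a)(A(a) \to A(a+\omega^b))$, proves ${\rm Prog}(A) \to {\rm Prog}(J_A)$ one level down, and then iterates the jump \emph{under the truth predicate}, using ordinary induction on $n$ applied to the formula $T(A(\omega^{(n)}))$ (a formula of the truth theory's extended language) to collect all finite towers at once and land on $\varepsilon_0$. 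You instead invoke the formalized Gentzen result --- uniform provability of ${\rm Prog}(A) \to (\forall b \prec \alpha)A(b)$ for all $\alpha \prec \varepsilon_0$ --- and then use reflection (provable implies true, via Proposition \ref{provetrue} and ${\rm Acc}(0)$) plus the $\omega$-rule and T-scheme to cross the limit. Both routes are sound; note only that the ``standard formalized Gentzen argument'' you cite as a black box is, internally, exactly the jump-predicate construction, so the paper's version buys a real economy: running the jump iteration under $T$ replaces the arithmetization of an infinite family of proof templates with a single induction inside the truth theory. Your version costs one extra layer (reflection lives one level above where the provability statement is formulated), which is harmless in the union, and carries a heavier, though standard, formalization burden in the base case.
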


\begin{proof}
We make the argument for any first-order theory $\mathcal{S}$ that extends ${\rm PA}$.
Fix $n \prec \omega$. The main idea is to prove, in
${\rm Tarski}^n_{\preceq}(\mathcal{S})$, transfinite induction up to $\gamma_2$ for
${\rm Tarski}^{n-1}_{\preceq}(\mathcal{S})$, and then inductively apply Corollary \ref{upgamma} to get
transfinite induction up to $\gamma_{k+1}$ in ${\rm Tarski}^{n-k}(\mathcal{S})$, for $1 \leq k \leq n$,
yielding finally transinite induction up to $\gamma_{n+1}$ for $\mathcal{S}$. At that point the full
statement of the theorem will follow easily.

The base case of transfinite induction up to $\gamma_2$ is similar to Lemma \ref{lemma1}, but
different enough to merit being written out. Taking $\mathcal{S}' = {\rm Tarski}^{n-1}_{\preceq}(\mathcal{S})$,
we want to prove, in ${\rm Tarski}^n_{\preceq}(\mathcal{S}) = {\rm Tarski}_{\preceq}(\mathcal{S}')$, transfinite induction up to $\gamma_2 =\varepsilon_0$ for $\mathcal{S}'$. To do this we introduce a
{\it jump predicate} $J_A$, for every predicate $A$ in the language of $\mathcal{S}'$, defined by
$$J_A(b) := (\forall a)(A(a) \to A(a + \omega^b)).$$
This is a different predicate for each $A$. Now, for any $A$,
we start by proving in $\mathcal{S}'$ that progressivity of $A$ implies progressivity of $J_A$. Fix $A$ and
assume it is progressive. Then $J_A(0)$ says that $(\forall a)(A(a) \to A(a + 1))$, and this
is an immediate consequence of progressivity of $A$. If $b$ is a limit, then $J_A(b')$ for all
$b' \prec b$ says that for any $a$ we have $A(a) \to A(a + \omega^{b'})$ for all $b' \prec b$, and this
yields $A(a) \to A(a + \omega^b)$ for all $a$, by progressivity of $A$ again. Finally, assuming $J_A(b)$,
we must prove $J_A(b+1)$. Here we use the reasoning that $J_A(b)$ tells us that $A(a)$ implies
$A(a + \omega^b)$, and also that $A(a + \omega^b)$ implies $A(a + \omega^b + \omega^b)$, and so on;
that is, an induction argument shows that $J_A(b)$ implies $(\forall a)(A(a) \to A(a + \omega^b\cdot n))$
for all $n \prec \omega$. One final appeal to progressivity of $A$ then yields $J_A(b) \to J_A(b+1)$.

For a given predicate $A$ in the language of $\mathcal{S}'$, we have shown, in $\mathcal{S}'$, that
${\rm Prog}(A) \to {\rm Prog}(J_A)$. Now, reasoning in the one-step truth theory
${\rm Tarski}(\mathcal{S}')$, we can see that $T({\rm Prog}(A))$ implies $T({\rm Prog}(J_A))$ implies
$T({\rm Prog}(J_{J_A}))$ implies $\cdots$, for any $A$. Thus, if $A$ is progressive then $A(1)$ is true,
$J_A(1)$ is true (which implies that $A(\omega)$ is true), $J_{J_A}(1)$ is true (which implies that
$J_A(\omega)$ is true, which implies that $A(\omega^\omega)$ is true), and so on. Inductively we
get $(\forall n \prec \omega)T(A(\omega^{(n)}))$, where $\omega^{(1)} = \omega$ and $\omega^{(n+1)} =
\omega^{\omega^{(n)}}$. So $T({\rm Prog}(A))$ implies $T(A(\varepsilon_0))$.

That was in ${\rm Tarski}(\mathcal{S}')$. But in ${\rm Tarksi}_{\preceq}(\mathcal{S}')$ we have
${\rm Prog}({\rm Acc})$, so in particular we have ${\rm Acc}(0)$, which means that we can reason in
the one-step truth theory ${\rm Tarski}(\mathcal{S}')$. Thus ${\rm Tarski}^n_{\preceq}(\mathcal{S})$
proves transfinite induction up to $\gamma_2 = \varepsilon_0$ for
${\rm Tarski}^{n-1}_{\preceq}(\mathcal{S})$.

From this point, we can apply Corollary \ref{upgamma} $n - 1$ times to obtain transfinite induction up
to $\gamma_{n + 1}$ for $\mathcal{S}$, where $n$ was arbitrary. But
${\rm Tarksi}^\omega_{\preceq}(\mathcal{S})$
literally equals ${\rm Tarski}^\omega_{\preceq}({\rm Tarski}^k_{\preceq}(\mathcal{S}))$ for any $k$, so
this argument actually yields transfinite induction up to $\gamma_{n+1}$ for
${\rm Tarski}^k_{\preceq}(\mathcal{S})$, for any $k$ and any $n$.
We conclude that ${\rm Tarski}^\omega_{\preceq}(\mathcal{S})$ proves ${\rm Prog}(A) \to A(\gamma_n)$
for all $n$ and all predicates $A$ in its language.
\end{proof}

\section{Conclusion}

So predicativists, or really anyone who accepts countablist reasoning, can affirm transfinite induction up to
anything less than $\Gamma_0$ in ${\rm Tarski}^\omega_\preceq({\rm PA})$. But then one further step to
${\rm Tarski}^{\omega + 1}_\preceq({\rm PA})$ gets
us beyond $\Gamma_0$, and we can keep going. Indeed, the ``higher order''
construction mentioned in Section 6 gets vastly beyond $\Gamma_0$.

The celebrated Feferman-Sch\"utte analysis which identified $\Gamma_0$ as the exact limit of predicative
reasoning is utter nonsense --- I am sorry I have to say this so plainly, but apparently I do. It was already
thoroughly refuted in \cite{weaver}; see \cite{weaver1} for a short
explanation of the problems. This analysis
was based on the idea from Section 1 about accumulating ever-larger provable ordinals
as we work our way up a hierarchy, leading us to accept ever-higher theories in the hierarchy. The (well, one)
fatal flaw in this approach is that at stage $n$ we do not prove transfinite induction up to $\gamma_n$ for all
predicates in the language, only for predicates of the form ``$x \in A$'', i.e., we prove induction up to
$\gamma_n$ for sets. But at stage $n+1$ we require, not induction up to
$\gamma_n$ for sets, but recursion up to $\gamma_n$. So we have to make an inference from
induction to recursion, which is something predicativists cannot do. (Or, if they magically somehow could,
then they could use this fact to
get beyond $\Gamma_0$. The only way to land exactly on $\Gamma_0$ is by postulating
that predicativists can always, somehow, infer recursion from induction, but {\it they are not aware that
they have this general ability} and so cannot formulate it as a general principle. The foundations community
is going to have to choose between affirming this absurdity, which has no rational basis whatever, and admiting
that Feferman and Sch\"utte simply got it wrong.)

The induction/recursion issue doesn't arise in the analysis of truth theories presented here, because we
actually prove full induction at each stage, not just induction for sets. But the idea of working up a hierarchy
is more subtle in this approach, because one actually works {\it down} a tower of ${\rm Tarski}_{\preceq}$
theories in order to move up through the $\gamma_n$'s.

It would be very interesting to see how much further one can get in a system that formalizes the idea that
whenever we have a construction that converts any accepted system $\mathcal{S}$ into another, stronger
accepted system, we can accept a system that iterates this construction along $\preceq$. Another challenge
for the reader, one that would get us closer to understanding the {\it true} ordinal strength of predicativism.

\end{document}